\theoremstyle{plain}
\newtheorem{thm}[subsubsection]{Theorem}
\newtheorem{prop}[subsubsection]{Proposition}
\theoremstyle{definition}
\theoremstyle{definition}
\numberwithin{equation}{subsubsection}
\def\11{\mathbf{1}}
\def\AA{\mathbf{A}} 
\def\CC{\mathbf{C}}
\def\QQ{\mathbf{Q}}
\def\ZZ{\mathbf{Z}}
\def\coker{\mathrm{coker}}
\def\DM{\mathcal{D}}
\def\Gr{\mathrm{Gr}}
\def\id{\mathrm{id}}
\def\rat{\mathrm{rat}}
\def\Spec{\mathrm{Spec}}
\newcommand{\mapright}[1]{\xrightarrow{#1}}
\newcommand{\holim@}[2]{%
      \vtop{\m@th\ialign{##\cr
                  \hfil$#1\operator@font holim\,$\hfil\cr
                      \noalign{\nointerlineskip\kern1.5\ex@}#2\cr
                          \noalign{\nointerlineskip\kern-\ex@}\cr}}%
                      }
                      \newcommand{\holim}{%
                            \mathop{\mathpalette\holim@{\leftarrowfill@\textstyle}}\nmlimits@
                        }
                        \title{On the Clemens-Schmid exact sequence}
\author{R. Virk}
\begin{document}
\maketitle
\renewcommand{\thesubsection}{\textbf{\arabic{subsection}}}
\renewcommand{\thesubsubsection}{\textbf{\arabic{subsection}.\arabic{subsubsection}}}

The Clemens-Schmid exact sequence \cite{C} relates the complex geometry of a semistable degeneration of varieties to that of a general fibre and the monodromy of the degeneration. It is a well known tool for studying semistable degenerations.

Less widely known is that straightforward yoga with weights yields statements in greater generality (eg., there is no need to restrict to the semistable situation). The purpose of this note is to explain this via Theorem \ref{csthm}, Theorem \ref{invthm} and Theorem \ref{uncsthm}.
We use M. Saito's mixed Hodge modules to talk about weights. The essence of the argument is extracted from \cite[Th\'eor\`eme 3.6.1]{D} (also see \cite[Remark 5.2.2]{S88}). In particular, no claims to originality are being made.

\subsection{Preliminaries on mixed Hodge modules}
\subsubsection{}
For a variety\footnote{`Variety' = `separated scheme of finite type over $\mathrm{Spec}(\CC)$'.} $X$, 
we write $\DM(X)$ for the bounded derived category of mixed Hodge modules on $X$. We will mainly use formal properties of the standard functors between these categories, and their interactions with weights, as explained in \cite{S89}.

\subsubsection{}
Part of the data defining a mixed Hodge module $M$ consists of a perverse sheaf $M_{\rat}$ with $\QQ$-coefficients and a finite increasing filtration, the \emph{weight filtration}, on $M_{\rat}$. Morphisms of mixed Hodge modules are \emph{strictly} compatible with the weight filtration. 
A mixed Hodge module is \emph{pure} if its weight filtration is concentrated in a single weight.
An object $M\in\DM(X)$ is called pure of weight $k$ if its $n$-th cohomology module (corresponding to the $n$-th perverse cohomology of $M_{\rat}$) is pure of weight $n+k$. A mixed Hodge module over a point is a polarizable $\QQ$-mixed Hodge structure with the usual notion of weights/weight filtration from mixed Hodge theory.

\subsubsection{}Given a morphism of varieties $f\colon X\to Y$, there are functors $f_*,f_!, f^*, f^!, \boxtimes$ that lift their counterparts on the underlying derived categories of constructible sheaves. (Note: functors on derived categories will always be derived. I.e., we write $f_*$ instead of $Rf_*$, etc.).
The functors $f_*$ and $f^!$ do not lower weights, $f_!$ and $f^*$ do not raise weights, and $\boxtimes$ adds weights. In particular, if $a\colon X\to \Spec(\CC)$ is proper and $M\in \DM(X)$ is pure of weight $0$, then the cohomology module (i.e., Hodge structure):
\[ H^k(X; M) = H^k(a_*M) \]
 is pure of weight $k$.
 
\subsection{Standard exact sequences}
\subsubsection{}Let $i\colon X_0\to X$ be a closed immersion and $j\colon X-X_0 \to X$ the inclusion of its open complement. Then, for $M\in \DM(X)$, we have a canonical distinguished triangle:
\[ i_*i^!M \to M \to j_*j^*M \mapright{[1]} \]
Applying $i^*$ yields the distinguished triangle:
\[ i^!M \to i^*M \to i^*j_*j^*M \mapright{[1]} \]
Now taking $\ast$-pushforward along $X_0\to \Spec(\CC)$ yields the long exact sequence of mixed Hodge structures:
\begin{equation}\label{relles}
\cdots \to H^k(X_0; i^!M) \to H^k(X_0; i^*M) \to H^k(X_0; i^*j_*j^*M) \to \cdots
\end{equation}

\subsubsection{}Given a morphism $f\colon X \to \AA^1$, we set $X_0 = f^{-1}(0)$ and $X^* = X - X_0$. We write $i\colon X_0 \to X$ and $j\colon X^*\to X$ for the inclusions.
We also have the nearby cycles functor $\psi_f\colon \DM(X) \to\DM(X_0)$
that lifts its counterpart on constructible sheaves. Our shift convention is that $\psi_f[-1]$ is exact on mixed Hodge modules/perverse sheaves. 

\subsubsection{}Write $\psi_f^u$ for the unipotent part of $\psi_f$. The log of the unipotent part of the monodromy operator on nearby cycles yields a canonical map:
\[ N \colon \psi_f^u(M) \to \psi_f^u(M)(-1) \]
in $\DM(X_0)$.
Here, $M(-1) = M\boxtimes \QQ(-1)$, where $\QQ(-1)$ is the one-dimensional Hodge structure of type $(1,1)$.
This map fits into a canonical distinguished triangle (see \cite[Remark 5.2.2]{S88}):
\[ i^*j_*j^*(M) \to \psi_f^u(M) \mapright{N} \psi_f^u(M)(-1) \mapright{[1]} \]
which in turn yields a long exact sequence of mixed Hodge structures:
\begin{equation}\label{monles}
\cdots \to H^k(X_0; i^*j_*j^*M) \to H^k(X_0; \psi_f^u(M)) \mapright{N} H^k(X_0;\psi_f^u(M))(-1) \to \cdots
\end{equation}

\subsection{Weight considerations}
\subsubsection{}
Let $k\in\ZZ$.
Given a nilpotent endomorphism $N$, of an object $V$ in an abelian category,
there exists a unique finite increasing filtration $V_{\bullet}$ of $V$, such that $NV_i \subset NV_{i-2}$ and such that $N^i$ induces an isomorphism: $\Gr_{k+i}V\mapright{\sim} \Gr_{k-i}V$. This is the \emph{monodromy filtration} of \cite[1.6]{D} centered at $k$.

\subsubsection{}\label{monwts}
Let $f\colon X\to \AA^1$ be a proper morphism. If $M\in\DM(X)$ is pure of weight $0$, then the weight filtration on $H^k(X_0; \psi_f^u(M))$ coincides with the monodromy filtration determined by $N$ and centered at $k$. In particular:
\begin{enumerate}
\item $\ker(N\colon H^k(X_0; \psi_f^u(M)) \to H^k(X_0; \psi_f^u(M))(-1))$ has weights $\leq k$;
\item $\coker(N\colon H^k(X_0; \psi_f^u(M)) \to H^k(X_0; \psi_f^u(M))(-1))$ has weights $\geq k+2$.
\end{enumerate}

\subsubsection{}
By \eqref{relles} and \eqref{monles} we have a diagram:
\begin{equation}\label{dia1}\begin{gathered}\xymatrixrowsep{7mm}\xymatrix{
H^k(X_0; i^*M)\ar[d] \\
H^k(X_0; i^*j_*j^*M) \ar[d]\ar[r] & H^k(X_0;\psi^u_f(M))\ar[r]^-N & H^k(X_0;\psi_f^u(M))(-1) \\
H^{k+1}(X_0; i^!M) }
\end{gathered}\end{equation}
where both the row and column are exact.
\begin{prop}[Local invariant cycles]\label{1prop}
Let $M\in \DM(X)$ be pure. If $f\colon X\to \AA^1$ is proper, then the sequence:
\[ H^k(X_0; i^*M) \to H^k(X_0; \psi_f^u(M)) \mapright{N} H^k(X_0; \psi_f^u(M))(-1) \]
is exact for each $k$.
\end{prop}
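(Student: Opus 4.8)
The plan is to run the standard weight argument on the diagram \eqref{dia1}. The map in the statement is the composite of the vertical arrow $u\colon H^k(X_0;i^*M)\to H^k(X_0;i^*j_*j^*M)$ with the horizontal arrow $\phi\colon H^k(X_0;i^*j_*j^*M)\to H^k(X_0;\psi_f^u(M))$, and exactness is to be checked at the middle term. Since the row \eqref{monles} is exact, $N\circ\phi=0$, so $\mathrm{im}(\phi\circ u)\subseteq\ker N$ for free; the whole content lies in the reverse inclusion. For clarity I take $M$ pure of weight $0$; in general every weight bound below is shifted by $\wt(M)$, and the monodromy filtration of \ref{monwts} is recentered at $k+\wt(M)$, so nothing else changes.

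First I would assemble three weight estimates. Because $i^*$ does not raise weights and $X_0\to\Spec(\CC)$ is proper, $H^k(X_0;i^*M)$ has weights $\leq k$. Dually, since $i^!$ does not lower weights and $*$-pushforward does not lower weights, $H^{k+1}(X_0;i^!M)$ has weights $\geq k+1$. Finally, \ref{monwts}(i) gives that $\ker\bigl(N\colon H^k(X_0;\psi_f^u(M))\to H^k(X_0;\psi_f^u(M))(-1)\bigr)$ has weights $\leq k$.

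With these in hand the argument is a weighted diagram chase. Exactness of \eqref{monles} identifies $\mathrm{im}\,\phi=\ker N$, while exactness of the column \eqref{relles} identifies $\mathrm{im}\,u$ with the kernel of the map $H^k(X_0;i^*j_*j^*M)\to H^{k+1}(X_0;i^!M)$; hence $\coker(u)$ embeds into $H^{k+1}(X_0;i^!M)$ and so has weights $\geq k+1$. Now $\phi$ carries $\mathrm{im}\,u$ onto $\mathrm{im}(\phi\circ u)$ and $H^k(X_0;i^*j_*j^*M)$ onto $\ker N$, hence induces a surjection $\coker(u)\twoheadrightarrow \ker N/\mathrm{im}(\phi\circ u)$. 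The target therefore has weights $\geq k+1$; but it is also a quotient of $\ker N$, hence has weights $\leq k$. By strictness of morphisms of mixed Hodge structures these bounds are incompatible unless $\ker N/\mathrm{im}(\phi\circ u)=0$, which is exactly the asserted exactness.

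Granting \ref{monwts}, the proposition has no genuinely hard step: it is a formal squeeze in the abelian category of mixed Hodge structures. The real depth is hidden in \ref{monwts} itself (that the weight filtration on nearby cycles is the monodromy filtration), so that $\ker N$ lands in weights $\leq k$. What remains here is only to book the weights in the correct order, pairing the bound that $\ker N$ has weights $\leq k$ against the bound that $H^{k+1}(X_0;i^!M)$ has weights $\geq k+1$ so that the two estimates on $\ker N/\mathrm{im}(\phi\circ u)$ become contradictory; reversing either inequality is the only way to break the argument.
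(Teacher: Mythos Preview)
Your argument is correct and is essentially the same as the paper's: both reduce to the observation that $\ker N$ has weights $\le k$ while $\coker(u)$ embeds into $H^{k+1}(X_0;i^!M)$ and hence has weights $\ge k+1$, forcing the obstruction $\ker N/\mathrm{im}(\phi\circ u)$ to vanish. The paper phrases this as ``$u$ is surjective on weights $\le k$'' rather than as a squeeze on the quotient, but the content is identical.
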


\begin{proof}
We may assume $M$ is pure of weight $0$. Then $\ker(N)$ has weights $\leq k$ by \S\ref{monwts}(i).
Consequently, by the exactness of the row in \eqref{dia1}, it suffices to show $H^k(X_0; i^*M) \to H^k(X_0; i^*j_*j^*M)$ is surjective on weights $\leq k$.
As $i^!$ does not lower weights, $H^{k+1}(X_0;i^!M)$ has weights $\geq k+1$. In view of the exactness of the column in \eqref{dia1}, this yields the desired surjectivity.
\end{proof}

\subsubsection{}From \eqref{relles} and \eqref{monles} we also have a diagram:
\begin{equation}\label{dia2}\begin{gathered}\xymatrixrowsep{7mm}\xymatrix{
&& H^{k+1}(X_0;i^*M) \ar[d] \\
H^k(X_0; \psi_f^u(M)) \ar[r]^-N & H^k(X_0; \psi_f^u(M))(-1) \ar[r] & H^{k+1}(X_0; i^*j_*j^*M)\ar[d] \\
&& H^{k+2}(X_0;i^!M)
}\end{gathered}\end{equation}
where both the row and column are exact.
\begin{prop}\label{2prop}Let $M\in \DM(X)$ be pure. If $f\colon X \to \AA^1$ is proper, then:
\[ H^k(X_0; \psi^u_f(M)) \mapright{N} H^k(X_0; \psi^u_f(M))(-1) \to H^{k+2}(X_0; i^!M) \]
is exact for each $k$.
\end{prop}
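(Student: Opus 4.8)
The plan is to argue exactly as in Proposition~\ref{1prop}, but using the complementary half of the weight estimates in \S\ref{monwts}. After a Tate twist I may assume $M$ is pure of weight $0$. The displayed map is the composite
\[ H^k(X_0;\psi_f^u(M))(-1) \mapright{\alpha} H^{k+1}(X_0;i^*j_*j^*M) \mapright{\gamma} H^{k+2}(X_0;i^!M) \]
of the two edges of \eqref{dia2} that meet at $H^{k+1}(X_0;i^*j_*j^*M)$, where $\alpha$ comes from the (exact) row and $\gamma$ from the (exact) column. Exactness of the row gives $\mathrm{im}(N)=\ker(\alpha)$, whence $\mathrm{im}(N)\subseteq\ker(\gamma\circ\alpha)$ automatically; the content is the reverse inclusion, which amounts to showing $\ker(\gamma)\cap\mathrm{im}(\alpha)=0$, i.e.\ that $\gamma$ is injective on $\mathrm{im}(\alpha)$.

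First I would pin down the two relevant weight ranges. Since morphisms of mixed Hodge modules are strict, exactness of the row identifies $\mathrm{im}(\alpha)\cong H^k(X_0;\psi_f^u(M))(-1)/\mathrm{im}(N)=\coker(N)$ as mixed Hodge structures, and by \S\ref{monwts}(ii) this has weights $\geq k+2$. For the other range, exactness of the column gives $\ker(\gamma)=\mathrm{im}\bigl(H^{k+1}(X_0;i^*M)\to H^{k+1}(X_0;i^*j_*j^*M)\bigr)$. As $i^*$ does not raise weights, $i^*M$ has weights $\leq 0$, and as the proper pushforward $X_0\to\Spec(\CC)$ does not raise weights either, $H^{k+1}(X_0;i^*M)$ has weights $\leq k+1$; hence so does $\ker(\gamma)$.

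With both ranges in hand the argument closes: $\mathrm{im}(\alpha)$ sits in weights $\geq k+2$ and $\ker(\gamma)$ in weights $\leq k+1$, so strictness forces $\mathrm{im}(\alpha)\cap\ker(\gamma)=0$. I expect the only delicate point to be the bookkeeping that makes strictness do its job — namely that the isomorphism $\mathrm{im}(\alpha)\cong\coker(N)$ is one of mixed Hodge structures, and that the weight bound on $H^{k+1}(X_0;i^*M)$ is correctly propagated through $i^*$ and the proper pushforward. This is the precise mirror of Proposition~\ref{1prop}, which instead used that $\ker(N)$ has weights $\leq k$ and that $i^!$ does not \emph{lower} weights.
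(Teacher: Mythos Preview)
Your proof is correct and follows essentially the same route as the paper's: both reduce to showing that $\gamma$ is injective on $\mathrm{im}(\alpha)$, using that $\mathrm{im}(\alpha)\cong\coker(N)$ has weights $\geq k+2$ (from \S\ref{monwts}(ii)) while $\ker(\gamma)$, being the image of $H^{k+1}(X_0;i^*M)$, has weights $\leq k+1$ (since $i^*$ and the proper pushforward do not raise weights). You have merely unpacked the phrase ``injective on weights $\geq k+2$'' a bit more explicitly than the paper does.
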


\begin{proof}We may assume $M$ is pure of weight $0$. Then $\coker(N)$ has weights $\geq k+2$ by \S\ref{monwts}(ii). 
Consequently, by the exactness of the row in \eqref{dia2}, it suffices to show $H^{k+1}(X_0; i^*j_*j^*M) \to H^{k+2}(X_0; i^!M)$ is injective on weights $\geq k+2$.
As $i^*$ does not raise weights, $H^{k+1}(X_0;i^*M)$ has weights $\leq k+1$. In view of the exactness of the column in \eqref{dia2}, this yields the desired injectivity. 
\end{proof}

\subsubsection{}
Next, from \eqref{relles} and \eqref{monles} once again, we have the diagram:
\begin{equation}\label{dia3}\begin{gathered}\xymatrixrowsep{7mm}\xymatrixcolsep{7mm}\xymatrix{
H^k(X_0;\psi_f^u(M))(-1) \ar[r] & H^{k+1}(X_0;i^*j_*j^*M)\ar[r]\ar[d] & H^{k+1}(X_0;\psi_f^u(M)\ar[r]^-N& \\
& H^{k+2}(X_0; i^!M)\ar[d] \\
& H^{k+2}(X_0; i^*M)
}\end{gathered}\end{equation}
where both the row and column are exact.
\begin{prop}\label{3prop}
Let $M\in \DM(X)$ be pure. If $f$ is proper, then:
\[ H^k(X_0; \psi^u_f(M))(-1) \to H^{k+2}(X_0; i^!M) \to H^{k+2}(X_0;i^*M) \]
is exact for each $k$.
\end{prop}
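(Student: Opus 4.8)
The plan is to run the same weight‑theoretic argument as in Propositions \ref{1prop} and \ref{2prop}, now reading the relevant exactness off diagram \eqref{dia3}. As before I would first reduce to the case where $M$ is pure of weight $0$. Write $C=H^{k+1}(X_0;i^*j_*j^*M)$; let $\delta\colon H^k(X_0;\psi_f^u(M))(-1)\to C$ and $\gamma\colon C\to H^{k+1}(X_0;\psi_f^u(M))$ be the two horizontal maps of the row, and let $\alpha\colon C\to H^{k+2}(X_0;i^!M)$ and $\beta\colon H^{k+2}(X_0;i^!M)\to H^{k+2}(X_0;i^*M)$ be the two vertical maps of the column. The map in the statement is the composite $\alpha\delta$, and since $\beta\alpha=0$ the inclusion $\operatorname{im}(\alpha\delta)\subseteq\ker\beta$ is automatic; the content is the reverse inclusion.

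For the reverse inclusion I would exploit the two exactnesses exactly as in the previous proofs. Exactness of the column at $H^{k+2}(X_0;i^!M)$ gives $\ker\beta=\operatorname{im}\alpha=\alpha(C)$, while exactness of the row at $C$ gives $\operatorname{im}\delta=\ker\gamma$, whence $\operatorname{im}(\alpha\delta)=\alpha(\ker\gamma)$. Thus the proposition reduces to the single equality $\alpha(C)=\alpha(\ker\gamma)$, i.e. to showing that the subquotient $\alpha(C)/\alpha(\ker\gamma)$ vanishes.

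That vanishing is forced by incompatible weights. On the one hand, exactness of the row at $H^{k+1}(X_0;\psi_f^u(M))$ identifies $C/\ker\gamma$ with $\operatorname{im}\gamma=\ker\bigl(N\colon H^{k+1}(X_0;\psi_f^u(M))\to H^{k+1}(X_0;\psi_f^u(M))(-1)\bigr)$, which has weights $\le k+1$ by \S\ref{monwts}(i); hence $\alpha(C)/\alpha(\ker\gamma)$, being a quotient of $C/\ker\gamma$, has weights $\le k+1$. On the other hand, since $i^!$ does not lower weights, $H^{k+2}(X_0;i^!M)$ has weights $\ge k+2$, so $\alpha(C)/\alpha(\ker\gamma)$, being a subquotient of it, has weights $\ge k+2$. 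A mixed Hodge structure whose weights are simultaneously $\le k+1$ and $\ge k+2$ is zero, and this yields the desired equality.

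The only point requiring care — and the step I expect to be the main (if minor) obstacle — is the bookkeeping that makes strictness available: I must make sure that $\alpha,\gamma,\delta$ are genuine morphisms of mixed Hodge structures, so that the kernels, images and the identification $C/\ker\gamma\cong\operatorname{im}\gamma$ are formed in the abelian category of mixed Hodge structures, where $\mathrm{gr}^W$ is exact and morphisms are strict. This is guaranteed because \eqref{relles} and \eqref{monles} are long exact sequences of mixed Hodge structures. I note that the argument uses the same inputs as Proposition \ref{1prop} — the monodromy bound \S\ref{monwts}(i) together with the fact that $i^!$ does not lower weights — rather than those of Proposition \ref{2prop}.
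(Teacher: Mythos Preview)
Your proof is correct and is essentially the paper's argument spelled out more explicitly: both use column exactness to identify $\ker\beta=\alpha(C)$, row exactness to identify $\operatorname{im}(\alpha\delta)=\alpha(\ker\gamma)$, and then force $\alpha(C)/\alpha(\ker\gamma)=0$ via the weight bounds from \S\ref{monwts}(i) (giving weights $\le k+1$) and from $i^!$ not lowering weights (giving weights $\ge k+2$). The only difference is packaging---the paper phrases the key step as ``$\delta$ is surjective on weights $\ge k+2$'' rather than as vanishing of the subquotient---but the content is identical.
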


\begin{proof}
We may assume $M$ is pure of weight $0$. Then $H^{k+2}(X_0; i^!M)$ has weights $\geq k+2$. Consequently, by the exactness of the column in \eqref{dia3}, it suffices to show 
$H^{k}(X_0; \psi_f^u(M))(-1) \to H^{k+1}(X_0; i^*j_*j^*M)$
is surjective on weights $\geq k+2$. 
By \S\ref{monwts}(i), $\ker(N)$ has weights $\leq k+1$. Hence, by the exactness of the row in \eqref{dia3}, the map
$H^{k+1}(X_0; i^*j_*j^*M) \to H^{k+1}(X_0; \psi^u_f(M))$
 must be $0$ on weights $\geq k+2$. So row exactness of \eqref{dia3} yields the desired surjectivity.
\end{proof}

\subsubsection{}
Finally, using \eqref{relles} and \eqref{monles} (one last time), we have the diagram:
\begin{equation}\label{dia4}\begin{gathered}\xymatrixrowsep{7mm}\xymatrix{
&&\ar[d]^N \\
&& H^{k-1}(X_0; \psi_f^u(M))(-1)\ar[d] \\
H^k(X_0;i^!M) \ar[r]& H^k(X_0; i^*M)\ar[r] & H^k(X_0; i^*j_*j^*M) \ar[d] \\
&& H^k(X_0; \psi_f^u(M))
}\end{gathered}\end{equation}
where both the row and column are exact.
\begin{prop}\label{4prop}
Let $M\in \DM(X)$ be pure. If $f$ is proper, then:
\[ H^{k}(X_0; i^!M) \to H^{k}(X_0; i^*M) \to H^{k}(X_0;\psi_f^u(M)) \]
is exact for each $k$.
\end{prop}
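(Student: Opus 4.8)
My plan is to run the same weight-separation argument used for Propositions \ref{1prop}--\ref{3prop}, now reading everything off \eqref{dia4}. First I would observe that the second map of the asserted sequence is not an arrow drawn directly in \eqref{dia4}: it is the composite of the row map $H^k(X_0; i^*M) \to H^k(X_0; i^*j_*j^*M)$ followed by the column map $H^k(X_0; i^*j_*j^*M) \to H^k(X_0; \psi_f^u(M))$. As in the previous proofs, I would reduce at once to the case where $M$ is pure of weight $0$.

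Next I would exploit exactness of the row: it identifies the image of $H^k(X_0; i^!M) \to H^k(X_0; i^*M)$ with the kernel of the row map $H^k(X_0; i^*M) \to H^k(X_0; i^*j_*j^*M)$. Consequently, exactness of the claimed sequence at $H^k(X_0; i^*M)$ is equivalent to the statement that the column map $H^k(X_0; i^*j_*j^*M) \to H^k(X_0; \psi_f^u(M))$ is injective on the image of the row map; equivalently, that this image meets the kernel of the column map only in $0$.

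I would then close the argument by weights. Since $i^*$ does not raise weights, $H^k(X_0; i^*M)$ has weights $\leq k$, hence so does the image of the row map inside $H^k(X_0; i^*j_*j^*M)$. On the other hand, exactness of the column identifies the kernel of $H^k(X_0; i^*j_*j^*M) \to H^k(X_0; \psi_f^u(M))$ with the image of $H^{k-1}(X_0; \psi_f^u(M))(-1) \to H^k(X_0; i^*j_*j^*M)$, which by \eqref{monles} is $\coker(N)$ in degree $k-1$. Applying \S\ref{monwts}(ii) in degree $k-1$, this cokernel has weights $\geq k+1$. Two sub-mixed Hodge structures of $H^k(X_0; i^*j_*j^*M)$ of weights $\leq k$ and $\geq k+1$ intersect in a sub-mixed Hodge structure whose weights are simultaneously $\leq k$ and $\geq k+1$, hence the intersection is $0$; this gives the required injectivity.

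I expect the only delicate point to be the index bookkeeping in the column of \eqref{dia4}: the relevant copy of $\coker(N)$ sits in degree $k-1$, so the correct instance of \S\ref{monwts}(ii) yields the bound $\geq k+1$ rather than $\geq k+2$, and it is precisely this bound that separates cleanly from the weight $\leq k$ image coming from $i^*M$. Beyond getting this shift right, there is no real obstacle: once the two weight ranges are pinned down, the conclusion is forced by their incompatibility, exactly as in the proofs of Propositions \ref{1prop}--\ref{3prop}.
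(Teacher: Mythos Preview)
Your proof is correct and follows essentially the same approach as the paper: reduce to weight $0$, use that $H^k(X_0;i^*M)$ has weights $\leq k$ since $i^*$ does not raise weights, and use \S\ref{monwts}(ii) in degree $k-1$ to see that the kernel of the column map has weights $\geq k+1$, so the two cannot meet nontrivially. The paper phrases the reduction slightly more tersely (``it suffices to show the column map is injective on weights $\leq k$'') where you spell out the equivalent condition that the image of the row map meets the kernel of the column map trivially, but the substance is identical.
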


\begin{proof}
We may assume $M$ is pure of weight $0$. Then $H^k(X_0;i^*M)$ has weights $\leq k$. Consequently, by the exactness of the row in \eqref{dia4}, it suffices to show that 
$H^k(X_0; i^*j_*j^*M) \to H^k(X_0; \psi_f^u(M))$
is injective on weights $\leq k$. By \S\ref{monwts}(ii),
\[ \coker(N\colon H^{k-1}(X_0;\psi_f^u(M)) \to H^{k-1}(X_0;\psi_f^u(M))(-1))\]
has weights $\geq k+1$. So column exactness of \eqref{dia4} yields the desired injectivity.
\end{proof}

\begin{thm}[Generalized Clemens-Schmid]\label{csthm}
Let $M\in \DM(X)$ be pure. If $f\colon X\to\AA^1$ is proper, then we have (two) long exact sequences:
\[ \cdots \to H^k(X_0, i^*M) \to H^k(X_0;\psi_f^u(M)) \mapright{N} H^k(X_0; \psi_f^u(M))(-1) \to H^{k+2}(X_0; i^!M) \to \cdots \]
\end{thm}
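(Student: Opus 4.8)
The plan is to assemble the displayed sequence by splicing together the four three-term exact sequences of Propositions \ref{1prop}--\ref{4prop}, which is exactly the operation of interleaving the two long exact sequences \eqref{relles} and \eqref{monles} along their common terms $H^\bullet(X_0;i^*j_*j^*M)$. First I would write the candidate sequence out explicitly, with period four, and name each of its arrows. The maps of the form $H^k(X_0;i^!M)\to H^k(X_0;i^*M)$ are taken straight from \eqref{relles}, and the maps $N$ are taken straight from \eqref{monles}. The two ``mixing'' arrows
\[ H^k(X_0;i^*M)\to H^k(X_0;\psi_f^u(M)), \qquad H^k(X_0;\psi_f^u(M))(-1)\to H^{k+2}(X_0;i^!M) \]
are defined as the composites that factor through $H^\bullet(X_0;i^*j_*j^*M)$: the first is the arrow $H^k(X_0;i^*M)\to H^k(X_0;i^*j_*j^*M)$ of \eqref{relles} followed by $H^k(X_0;i^*j_*j^*M)\to H^k(X_0;\psi_f^u(M))$ of \eqref{monles}, while the second is the connecting map $H^k(X_0;\psi_f^u(M))(-1)\to H^{k+1}(X_0;i^*j_*j^*M)$ of \eqref{monles} followed by $H^{k+1}(X_0;i^*j_*j^*M)\to H^{k+2}(X_0;i^!M)$ of \eqref{relles}. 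These are precisely the composites appearing implicitly in the statements of the four propositions.

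Next I would observe that the resulting sequence is a complex and is exact at every term. Every term is of exactly one of the four shapes $H^\bullet(X_0;i^!M)$, $H^\bullet(X_0;i^*M)$, $H^\bullet(X_0;\psi_f^u(M))$, $H^\bullet(X_0;\psi_f^u(M))(-1)$, and exactness at each is one of the propositions already proved: exactness at $H^k(X_0;i^*M)$ is Proposition \ref{4prop}, at $H^k(X_0;\psi_f^u(M))$ is Proposition \ref{1prop}, at $H^k(X_0;\psi_f^u(M))(-1)$ is Proposition \ref{2prop}, and at $H^{k+2}(X_0;i^!M)$ is Proposition \ref{3prop}. Since the vanishing of consecutive composites is subsumed in each of these exactness statements, no separate check that we have a complex is required. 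Thus the displayed sequence is exact everywhere.

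The second of the two sequences I would obtain by running the identical argument for the Verdier dual $\mathbb{D}M$, which is again pure (of weight $0$ after the reduction used in the propositions), and then dualizing: here one uses that $\mathbb{D}$ interchanges $i^*$ and $i^!$, commutes with $\psi_f^u$ up to a Tate twist, and negates weights, so the purity and weight inequalities driving Propositions \ref{1prop}--\ref{4prop} are preserved. This yields the companion long exact sequence with the roles of $i^*M$ and $i^!M$ exchanged.

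I expect the only real obstacle to be bookkeeping rather than mathematics: one must confirm that the ``mixing'' arrow used in Proposition \ref{1prop} is literally the same composite through $H^\bullet(X_0;i^*j_*j^*M)$ as the one in Proposition \ref{4prop} (and likewise that the connecting arrow of Proposition \ref{2prop} agrees with that of Proposition \ref{3prop}), and that the index shift $k\mapsto k+2$ across the connecting map $H^k(X_0;\psi_f^u(M))(-1)\to H^{k+2}(X_0;i^!M)$ is consistent from one period to the next. All the essential content is in the weight estimates already carried out in the four propositions, so the theorem reduces to verifying that these four overlapping three-term exact sequences chain together into a single complex.
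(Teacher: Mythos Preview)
Your main argument is exactly the paper's: the theorem is the four three-term exact sequences of Propositions \ref{1prop}--\ref{4prop} spliced together, and the paper's proof says only that. Your explicit description of the mixing maps as composites through $H^\bullet(X_0;i^*j_*j^*M)$ is correct and is precisely how the maps in those propositions are defined via diagrams \eqref{dia1}--\eqref{dia4}.

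The one misstep is your reading of ``(two)''. There is no companion sequence with $i^*$ and $i^!$ exchanged; your Verdier-duality paragraph is unnecessary and, in any case, would just reproduce the same sequence for $\mathbb{D}M$ (which is again pure), not a structurally different one. The parenthetical ``(two)'' refers to the fact that the connecting map jumps the cohomological index by $2$: the displayed sequence links $H^k$ only to $H^{k\pm 2}$, so it decouples into one long exact sequence through the even-degree groups and another through the odd-degree groups. No extra argument is needed for this --- it is automatic from the form of the sequence you have already assembled.
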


\begin{proof}This is just Propositions \ref{1prop}, \ref{2prop}, \ref{3prop} and \ref{4prop} stated in combined form.
\end{proof}

\subsection{Applications}
\subsubsection{}Let $a\colon X\to \Spec(\CC)$ be the evident map. Let $\QQ$ denote the trivial one-dimensional Hodge structure of weight $0$. Then setting:
\begin{align*}
H^*(X) &= H^*(X; a^*\QQ), \\
H^*_{X_0}(X) &= H^*(X; i^!a^*\QQ),
\end{align*}
yields canonical mixed Hodge structures on the usual cohomology groups (with $\QQ$-coefficients) on the left. Similarly, set:
\[ H^*(X_{\infty}) = H^*(X_0; \psi_f(a^*\QQ)). \]
Then $H^*(X_{\infty})$ is the limit Hodge structure associated to $f\colon X\to \AA^1$. If $f$ is proper, then forgetting Hodge structures, $H^*(X_{\infty})$ is isomorphic to $H^*(f^{-1}(t))$, for $t$ sufficiently close to $0$. However, this is usually not an isomorphism of Hodge structures.
Similarly, if we let $T$ be the monodromy operator on $H^*(X_{\infty})$, then $T$ usually does \emph{not} preserve Hodge structures. However, $T-\id$ and $N$ (the log of the unipotent part of $T$) have the same kernel (monodromy invariants).

\begin{thm}[Local invariant cycles]\label{invthm}Assume $X$ is rationally smooth. If $f\colon X \to \AA^1$ is proper, then for each $k$ we have an exact sequence of mixed Hodge structures:
\[ H^{k-2}_{X_0}(X)\to H^k(X_0) \to H^k(X_{\infty})^T \to 0, \]
where $(-)^T$ denotes monodromy invariants.
\end{thm}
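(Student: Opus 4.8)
The plan is to realize the asserted sequence as a segment of the Clemens--Schmid sequence of Theorem~\ref{csthm} for the pure object $M=a^*\QQ$. First I would note that the only role of rational smoothness is to make $M$ pure: if $X$ is rationally smooth of pure dimension $d$, then $\QQ_X[d]=\IC_X$ underlies a pure Hodge module of weight $d$, so $a^*\QQ$ is pure of weight $0$ in the sense of the preliminaries. As $f$ is proper, Theorem~\ref{csthm} then applies to $M$.

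Unwinding definitions, $H^k(X_0)=H^k(X_0;i^*M)$, $H^k_{X_0}(X)=H^k(X_0;i^!M)$, and $H^k(X_0;\psi_f^u M)$ is the unipotent summand of $H^k(X_\infty)$. The relevant segment of the Clemens--Schmid sequence is thus
\[ H^k_{X_0}(X)\to H^k(X_0)\to H^k(X_0;\psi_f^u M)\mapright{N} H^k(X_0;\psi_f^u M)(-1). \]
To recognise the cokernel-side term as monodromy invariants, I would split $H^k(X_\infty)$ into unipotent and non-unipotent parts for $T$. On the non-unipotent part $T-\id$ is invertible, so $H^k(X_\infty)^T=\ker(T-\id\colon H^k(X_0;\psi_f^u M)\to H^k(X_0;\psi_f^u M))$; on the unipotent part $T=\exp(N)$, whence $\ker(T-\id)=\ker N$. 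Thus $H^k(X_\infty)^T=\ker N$, and the specialization map $H^k(X_0)\to H^k(X_\infty)^T$ is the arrow $H^k(X_0;i^*M)\to H^k(X_0;\psi_f^u M)$ corestricted onto its image.

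With these identifications the statement is immediate from two of the propositions feeding Theorem~\ref{csthm}. Proposition~\ref{1prop} says that the image of $H^k(X_0;i^*M)\to H^k(X_0;\psi_f^u M)$ is exactly $\ker N=H^k(X_\infty)^T$, giving surjectivity (the right-hand $\to 0$). Proposition~\ref{4prop} gives exactness at $H^k(X_0)$, i.e.\ that the kernel of $H^k(X_0)\to H^k(X_\infty)^T$ equals the image of $H^k_{X_0}(X)=H^k(X_0;i^!M)$. Because morphisms of mixed Hodge modules are strict for the weight filtration, all arrows are morphisms of mixed Hodge structures, so the resulting three-term sequence is one of mixed Hodge structures.

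The substance is entirely contained in Theorem~\ref{csthm}; the only genuinely new bookkeeping is the identification $H^k(X_\infty)^T=\ker N$ above, together with checking that the Clemens--Schmid arrow $H^k(X_0;i^*M)\to H^k(X_0;\psi_f^u M)$ agrees with the geometric specialization $H^k(X_0)\to H^k(X_\infty)$, and that the incoming local cohomology term enters in the correct cohomological degree and Tate twist. I expect this last point---pinning down the precise degree and twist of the local cohomology term feeding $H^k(X_0)$---to be the one place demanding care.
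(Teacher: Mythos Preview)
Your argument is correct and follows the same route as the paper: reduce to purity of $a^*\QQ$ via rational smoothness, then read off the sequence from Theorem~\ref{csthm} (the paper's proof is the single line ``$a^*\QQ$ is pure, so Theorem~\ref{csthm}/Proposition~\ref{1prop} applies''). Your added unpacking---invoking Proposition~\ref{4prop} for exactness at $H^k(X_0)$ and the identification $H^k(X_\infty)^T=\ker N$ already noted in \S4.1---is exactly what the paper leaves implicit; your caution about the degree of the local cohomology term is warranted, since Proposition~\ref{4prop} in fact gives $H^k_{X_0}(X)\to H^k(X_0)$ rather than $H^{k-2}_{X_0}(X)$.
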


\begin{proof}
As $X$ is rationally smooth, $a^*\QQ$ is pure (ignoring cohomological degree, it is an irreducible mixed Hodge module with underlying perverse sheaf the intersection cohomology complex of $X$). Thus, Theorem \ref{csthm} (or Proposition \ref{1prop}) applies.
\end{proof}

\subsubsection{}
Using this recipe we may re-state Theorem \ref{csthm} under various guises. Let's just close with the most straightforward one:
\begin{thm}[Clemens-Schmid]\label{uncsthm}Assume $X$ is rationally smooth and that the monodromy operator acts unipotently. If $f\colon X\to \AA^1$ is proper, then:
\[ \cdots \to H^k(X_0) \to H^k(X_{\infty}) \mapright{N}H^k(X_{\infty})(-1) \to H^{k+2}_{X_0}(X) \to \cdots \]
is an exact sequence of mixed Hodge structures.
\end{thm}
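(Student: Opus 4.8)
The plan is to deduce this directly from the generalized Clemens--Schmid sequence of Theorem \ref{csthm} by specializing to $M = a^*\QQ$ and translating each term through the dictionary set up at the start of the Applications subsection. First I would check that the hypotheses of Theorem \ref{csthm} are met: rational smoothness of $X$ guarantees that $a^*\QQ$ is pure (its underlying perverse sheaf is the intersection cohomology complex of $X$), exactly as recorded in the proof of Theorem \ref{invthm}. Since $f$ is proper by assumption, Theorem \ref{csthm} applies verbatim to $M = a^*\QQ$.

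The substance of the argument is then a term-by-term identification. For the first term, writing $a_0 = a\circ i\colon X_0\to\Spec(\CC)$ for the structure map of $X_0$, we have $i^*a^*\QQ = a_0^*\QQ$, so $H^k(X_0; i^*M) = H^k(X_0)$. For the last term, $i^!a^*\QQ$ computes local cohomology along $X_0$: since $a_*i_* = (a_0)_*$, we get $H^{k+2}(X_0; i^!M) = H^{k+2}_{X_0}(X)$, matching the defining formula $H^*_{X_0}(X) = H^*(X; i^!a^*\QQ)$. For the two middle terms I would invoke the unipotence hypothesis: when the monodromy acts unipotently, the unipotent part $\psi_f^u$ of nearby cycles is all of $\psi_f$, whence $H^k(X_0;\psi_f^u(M)) = H^k(X_0;\psi_f(a^*\QQ)) = H^k(X_\infty)$, and the map $N$ is literally the map appearing in Theorem \ref{csthm}. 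Substituting these four identifications into the sequence of Theorem \ref{csthm} produces the asserted sequence.

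Finally, I would observe that every arrow is a morphism of mixed Hodge structures: the connecting maps, the map $N$, and the identifications above are all realized by functors and canonical triangles internal to $\DM$, so the mixed Hodge structure is carried along automatically. There is no real obstacle here---the genuine content lives in Theorem \ref{csthm} (hence in Propositions \ref{1prop}--\ref{4prop} and the weight computation of \S\ref{monwts})---and the only points demanding a moment's care are the unwinding of the local-cohomology notation $H^*(X;i^!a^*\QQ) = H^*(X_0;i^!a^*\QQ)$ and the verification that unipotence really collapses $\psi_f^u$ onto $\psi_f$, so that $H^*(X_\infty)$ as defined via $\psi_f$ coincides with its $\psi_f^u$-avatar.
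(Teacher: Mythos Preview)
Your proposal is correct and follows the same approach as the paper: apply Theorem \ref{csthm} to $M = a^*\QQ$, using rational smoothness to guarantee purity. The paper's proof is terser (two sentences), leaving implicit the term identifications and the role of the unipotence hypothesis that you spell out.
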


\begin{proof}
As $X$ is rationally smooth, $a^*\QQ$ is pure. Thus, Theorem \ref{csthm} applies.
\end{proof}

\end{document}